\theoremstyle{plain}
\newtheorem{theorem}{Theorem}[section]
\newtheorem{corollary}{Corollary}[section]
\newtheorem{proposition}{Proposition}[section]
\theoremstyle{definition}
\theoremstyle{remark}
\numberwithin{equation}{section} 
\begin{document}
\title[The weighted mean matrix with $w_n=2n+1$]{The weighted mean matrix with \\ weight sequence $w_n=2n+1$ is a \\ hyponormal operator on $\ell^2$} 

\author{H. C. Rhaly Jr.}
\address{1081 Buckley Drive\\ Jackson, MS  39206\\ U.S.A.}

\email{rhaly@alumni.virginia.edu}

\begin{abstract}
A weighted mean matrix whose weight sequence is linear with positive coefficients is shown to be a posinormal operator on $\ell^2$.  This operator is also shown to be coposinormal, so it and its adjoint have the same null space and the same range.  The posinormality result leads to a proof that the weighted mean matrix generated by the sequence of odd positive integers is hyponormal and a conjecture concerning a more general case. 
\end{abstract}

\dedicatory{In memory of Russell Aubrey Stokes (1922-2014)}

\subjclass[2010]{47B20, 47B37}

\keywords{hyponormal operator, posinormal operator, weighted mean matrix}

\maketitle

\section{Introduction}

If $\mathcal{B}(H)$ is the set of all bounded linear operators on a Hilbert space $H$, then the operator $A \in \mathcal{B}(H)$ is \textit{hyponormal}  if \begin{equation*} < (A^*A - AA^*) f , f > \hspace{2mm}  \geq \hspace{2mm} 0 \end{equation*}  for all $f \in H$, and $A $ is said to be  \textit{posinormal} if \begin{equation*} AA^* = A^*PA \end{equation*}  for some positive operator $P \in \mathcal{B}(H)$, called the \textit{interrupter}; $A$ is \textit{coposinormal} if $A^*$ is posinormal.  The following proposition (see \cite{KubrDugg}) contains some key facts about posinormal operators.

\begin{proposition}  \label{propposi} If $A \in \mathcal{B}(H)$, then the following are equivalent.
\begin{enumerate}
 \item There exists a nonnegative $P \in \mathcal{B}(H)$ such that $AA^* = A^*PA$ (i.e., $A$ is posinormal).
 \item There exists a nonnegative $P \in \mathcal{B}(H)$ such that $AA^* \leq A^*PA$.
 \item There exists a nonnegative $\alpha \in \mathbb{R}$ such that $AA^* \leq \alpha^2 A^*A$.
 \item $Ran(A) \subseteq Ran(A^*)$, where $Ran(A) = \{g \in H : g = Af$ for some $f \in H\}$.
 \item  There exists a $B \in \mathcal{B}(H)$ such that $A = A^*B$.
 \vspace{1mm}
 
 \noindent Moreover, each of the above assertions implies the following one.
 
 \vspace{1mm}
 
 \item $Ker (A) \subseteq Ker (A^*)$, where $Ker (A) = \{f \in H: Af = 0 \}$.
 
 \vspace{1mm}
 
 \noindent Furthermore, if $Ran (A)$ is closed, then these six assertions are all \\ equivalent.

\end{enumerate}

\end{proposition}

\begin{proof}  See \cite{KubrDugg}.
\end{proof}

\noindent Note that $A$ is hyponormal if part (c) of the proposition is satisfied for $\alpha = 1$, so hyponormal operators are necessarily posinormal.

A lower triangular infinite matrix $M =  [m_{ij}] \in \mathcal{B}(\ell^2)$ is {\it factorable} if its entries are of the form

\begin{equation*} m_{ij} = \left \{ \begin{array}{lll}
a_ic_j & if  &  j \leq i\\
0 & if & j > i \end{array}\right. \end{equation*}

\noindent where $a_i$ depends only on $i$ and $c_j$ depends only on $j$.   A \textit{weighted mean matrix} is a lower triangular matrix with entries $w_j/W_i$, where $\{w_j\}$ is a nonnegative sequence with $w_0 > 0$, and $W_i = \sum_{j=0}^i w_j$.   A weighted mean matrix is factorable, with $a_i = 1/W_i$  and $c_j = w_j$ for all $i$,$j$.   In recent decades these operators, acting on various sequence spaces, have been studied by many authors in a number of papers, including \cite{BorwGao}, \cite{GaoP01}, \cite{GaoP02}, \cite{Harr}, \cite{PazYous}, \cite{Rho01},and \cite{Rho02}.   Hyponormality on $\ell^2$ has been demonstrated in three special cases, namely, for the operators generated by $w_j = j+1$ (see \cite{RhaRho02}),  $w_j = (j+1)^2$ (see \cite{RhaRho03}), and $w_j=2-1/2^j$ (see \cite{RhaRho01}); the first and third examples were handled without software assistance, in contrast with what will be the case in this paper.

Initially under consideration here will be the weighted mean matrix $M$ associated with the general linear weight sequence $w_n = an+b$ where $a$, $b > 0$.  The first aim  is to show that this operator is both posinormal and coposinormal.  Posinormality will be useful for achieving the two main objectives of this paper, (1) showing that the matrix associated with the sequence $w_n=2n+1$ is a hyponormal operator on $\ell^2$ and (2) propounding a conjecture for hyponormality of the more general case.

\section{Posinormality and hyponormality}

Our first goal is to demonstrate that a weighted mean matrix $M$ whose weight sequence is linear with positive coefficients is a posinormal operator on $\ell^2$.   The following theorem from \cite{Rhal01} provides a tool for that purpose.

\begin{theorem}  \label{posi} Suppose $M = [a_ic_j]$ is a lower triangular factorable matrix that acts as a bounded operator on $\ell^2$  and that the following conditions are satisfied:
\begin{enumerate}

\item both $\{a_n\}$ and $\{a_n/c_n\}$ are positive decreasing sequences that converge to 0, and 

\item the matrix $B$ defined by $B = [b_{ij}]$ by

\begin{equation*} b_{ij} = \left \{ \begin{array}{lll}
c_i (\frac {1}{c_j} - \frac {1}{c_{j+1}}\frac {a_{j+1}}{a_j}) & if & i \leq j;\\
-\frac {a_{j+1}}{a_j} & if  &  i = j+1;\\
0 & if    & i > j+1. \end{array}\right.  \end{equation*}  is a bounded operator on $\ell^2$.  

\end{enumerate}

 \noindent Then $M$  is posinormal. \end{theorem}

\begin{proof}  See \cite{Rhal01}.
\end{proof}

\begin{theorem}  \label{linearposi} The weighted mean matrix $M$ associated with the weight sequence $w_n = an+b$ with $a$, $b >0$ is a posinormal operator on $\ell^2$.
\end{theorem}

\begin{proof}  It suffices to consider sequences of the form $w_n = kn+1$ with $k>0$.  It is clear that condition (a) of Theorem \ref{posi} is satisfied, so we focus our attention now on condition (b).  Since $c_n=kn+1$ and $a_n= \frac{2}{(n+1)(kn+2)}$, the matrix $B = [b_{ij}]$ entries become
\begin{equation*} b_{ij} = \left \{ \begin{array}{lll}
(ki+1) \cdot \frac{3k^2j^2+(5k^2+6k)j+2k^2+6k+2}{(j+2)(kj+1)(kj+k+1)(kj+k+2)} & if & i \leq j;\\
-\frac {(j+1)(kj+2)}{(j+2)(kj+k+2)} & if  &  i = j+1;\\
0 & if    & i > j+1. \end{array}\right. \end{equation*}

\noindent Note that for $i \leq j$, $0 < b_{ij} \leq 3/(j+2)$ for all $j$, so the upper triangular part $T$ of matrix $B$ is a bounded operator on $\ell^2$.  If $W$ is the weighted shift with weight sequence $\{\frac{(n+1)(kn+2)}{(n+2)(kn+k+2)} : n \geq 0 \}$, then $B = T - W$, so $B \in \mathcal{B}(\ell^2)$.  It now follows from Theorem \ref{posi} that $M$ is posinormal.
\end{proof}

We note that the range of $M$ contains all the $e_n$'s from the standard orthonormal basis for $\ell^2$.  From \cite{Rhal01} we know that the entries of the interrupter $P= [p_{ij}]=B^*B$  are given by

\begin{equation*} p_{ij} = \left \{ \begin{array}{lll}
\frac {c_j^2 c_{j+1}^2 a_{j+1}^2 + (\sum_{\ell=0}^j c_{\ell}^2) (c_{j+1}a_j-c_j a_{j+1})^2}{c_j^2 c_{j+1}^2 a_j^2} & if    & i = j;\\
\frac {(c_i a_{i+1} - c_{i+1} a_i) [c_j (\sum_{\ell=0}^{j+1} c_{\ell}^2)a_{j+1} -  c_{j+1} (\sum_{\ell=0}^j c_{\ell}^2 )a_j]}{c_i c_{i+1} c_j c_{j+1} a_i a_j} & if    & i > j;\\
\frac  {(c_j a_{j+1} - c_{j+1} a_j) [c_i (\sum_{\ell=0}^{i+1} c_{\ell}^2)a_{i+1} -  c_{i+1}( \sum_{\ell=0}^i c_{\ell}^2 )a_i]}{c_i c_{i+1} c_j c_{j+1} a_i a_j} & if  &  i < j. \end{array}\right. \end{equation*} 
For $M$ to be hyponormal, it must be true that \begin{equation*} \langle (M^*M - MM^*) f , f \rangle= \langle (M^*M - M^*PM) f , f \rangle =\langle (I -P) Mf , Mf \rangle  \geq 0 \end{equation*} for all $f$ in $\ell^2$.   Consequently, we can conclude that $M$ will be hyponormal when $Q : \equiv  I - P \ge 0$.   Again using $c_n=kn+1$ and $a_n= \frac{2}{(n+1)(kn+2)}$, we determine that the entries of $Q$ = [$q_{ij}$] are given by $q_{ij} = $  \small  \begin{equation} \label{eq:display}  \left \{ \begin{array}{lll}
\frac{6 k^6j^6 + (21k^6+42k^5) j^5 + (25k^6+126k^5+114k^4) j^4 + (9k^6+126k^5+266k^4+168k^3) j^3}{6 (j+2)(kj+1)^2 (kj+k+1)^2 (kj+k+2)^2} &     & \\ 
 \hspace{5mm} + \frac{(-3k^6+42k^5+190k^4+264k^3+160k^2) j^2 + (-2k^6+38k^4+96k^3+148k^2+96k) j}{6 (j+2)(kj+1)^2 (kj+k+1)^2 (kj+k+2)^2} &     & \\
 \hspace{15mm} + \frac{12k^2+48k+24}{6 (j+2)(kj+1)^2 (kj+k+1)^2 (kj+k+2)^2} & if    & i=j;\\

-\frac{1}{6} \cdot \frac{3 k^2 i^2 + (5k^2+6k) i + (2k^2+6k+2)}{(i+2) (ki+1) (ki+k+1)(ki+k+2)} \cdot \frac{(j+1)(k^4j^2 + (k^4 + 4k^2)j + 6k)}{(kj+1) (kj+k+1)(kj+k+2)} & if & i >  j;\\
-\frac{1}{6} \cdot \frac{3 k^2 j^2 + (5k^2+6k) j + (2k^2+6k+2)}{(j+2) (kj+1) (kj+k+1)(kj+k+2)} \cdot \frac{(i+1)(k^4i^2 + (k^4 + 4k^2)i + 6k)}{(ki+1) (ki+k+1)(ki+k+2)} & if  &  i < j. \end{array}\right. \end{equation} \normalsize

When $k=1$, $Q$ assumes the form studied in \cite{RhaRho02}.  When $k \ne 1$, the key lemma in \cite{RhaRho02} is not satisfied, so a different approach is required here.  Continuing toward hyponormality in the general case has heretofore not been fruitful, so we focus our attention now on the case $k=2$.

\begin{theorem}  \label{odd}  The weighted mean matrix $M$ associated with the weight sequence $w_n = 2n+1$ is a hyponormal operator on $\ell^2$.
\end{theorem}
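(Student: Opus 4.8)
The plan is to apply Theorem 2.1 to obtain posinormality with an explicit interrupter, and then to convert the posinormality identity into the hyponormality inequality. For $w_n = 2n+1$ one has $W_i = \sum_{j=0}^i (2j+1) = (i+1)^2$, so that the factorization $a_i = 1/W_i$, $c_j = w_j$ becomes
\[ a_i = \frac{1}{(i+1)^2}, \qquad c_j = 2j+1 . \]
Granting that $M \in B(\ell^2)$ (which follows from a Schur test, since each row of $M$ sums to $1$ and the column sums $\sum_{i\ge j}(2j+1)/(i+1)^2$ stay bounded), the key observation is that once $M$ is posinormal with interrupter $P$ we may write
\[ M^*M - MM^* = M^*M - M^*PM = M^*(I-P)M . \]
Hence $M$ is hyponormal as soon as $I - P \ge 0$, and the whole problem reduces to proving that the interrupter $P$ of Theorem 2.1 satisfies $P \le I$.

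First I would verify the two hypotheses of Theorem 2.1. Hypothesis (1) is immediate: $a_n = (n+1)^{-2}$ and $a_n/c_n = \big((n+1)^2(2n+1)\big)^{-1}$ are both positive, strictly decreasing, and tend to $0$. Hypothesis (2), the boundedness of $B$ on $\ell^2$, I would establish by estimating its entries: for $i \le j$ the entry $b_{ij} = c_i\big(\tfrac{1}{c_j} - \tfrac{1}{c_{j+1}}\tfrac{a_{j+1}}{a_j}\big)$ decays quickly in $j$, and the only subdiagonal entries $b_{j+1,j} = -a_{j+1}/a_j = -(j+1)^2/(j+2)^2$ are uniformly bounded, so a Schur test (or a Hilbert--Schmidt bound on the off-diagonal part) gives $B \in B(\ell^2)$. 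With both hypotheses in hand, Theorem 2.1 yields that $M$ is posinormal with interrupter $P = B^*B$ and supplies the explicit entries of $P$.

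The crux is then to show $I - P \ge 0$. Since $P = B^*B$, this is exactly the assertion that $\|Bx\|^2 = \langle Px,x\rangle \le \|x\|^2$ for every $x$, i.e. that $\|B\| \le 1$. I would attack this through finite sections: because the finitely supported vectors are dense and the form is bounded, it suffices to prove that every leading principal submatrix $I_n - P_n$ is positive semidefinite. To make these tractable I would first simplify the entries of $P$ using the closed forms
\[ \sum_{k=0}^{j} c_k^2 = \sum_{k=0}^{j}(2k+1)^2 = \frac{(j+1)(2j+1)(2j+3)}{3}, \qquad c_{j+1}a_j - c_j a_{j+1} = \frac{6j^2+16j+11}{(j+1)^2(j+2)^2}, \]
which reduce each $p_{ij}$ to an explicit rational function of $i$ and $j$. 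Using Sage I would then compute the determinants (or an $LDL^*$ decomposition) of the $I_n - P_n$, look for a positive closed form or a sign-definite recursion, and prove positivity by induction on $n$. A reassuring corner check is $p_{00} = 130/144$, so $(I-P)_{00} = 14/144 > 0$.

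The main obstacle is precisely this last positivity step. Unlike the $w_n = n+1$ case of \cite{4}, no single key lemma collapses the computation, and because $P$ is a \emph{full} matrix --- its entries mix the quadratic sums $\sum_k (2k+1)^2$ with the differences $c_{j+1}a_j - c_j a_{j+1}$ --- there is no obvious sum-of-squares presentation of $I - P$. The hard part will be taming the $n \times n$ principal minors: one must find and then rigorously justify (beyond the Sage evidence) a pattern forcing $\det(I_m - P_m) \ge 0$ for all $m$, or equivalently exhibit the Cholesky factor of $I - P$ in closed form. Securing that uniform positivity, and thereby $P \le I$, is the heart of the argument; once it is in place, the identity $M^*M - MM^* = M^*(I-P)M \ge 0$ finishes the proof.
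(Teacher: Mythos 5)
Your reduction is the same as the paper's: verify the hypotheses of Theorem 2.1 to get posinormality with interrupter $P=B^*B$, write $M^*M-MM^* = M^*(I-P)M$, and reduce hyponormality to $Q:=I-P\ge 0$, to be checked on finite sections. Your preparatory computations are also correct ($W_i=(i+1)^2$, $\sum_{k=0}^j c_k^2 = (j+1)(2j+1)(2j+3)/3$, $c_{j+1}a_j-c_ja_{j+1}=(6j^2+16j+11)/((j+1)^2(j+2)^2)$, and the corner check $q_{00}=14/144$ all agree with the paper's formula for $q_{mn}$).

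However, there is a genuine gap, and you have located it yourself: the positivity of the finite sections is not proved, only described as a program (``compute determinants in Sage, look for a pattern, prove it by induction''). That positivity argument \emph{is} the content of the paper's proof, and it is not a routine verification. The paper's key idea is an explicit elimination scheme: subtract $z_n:=\frac{(n+1)(n+3)}{(n+2)^2}$ times column $n+1$ from column $n$ (and then the same for rows), which works because the off-diagonal entries $q_{mn}$ factor as a function of $\min(m,n)$ times $\frac{\max(m,n)+1}{\max(m,n)+2}$; this rank-structure is what collapses $Q_N$ to a tridiagonal matrix $Y_N$ with explicit entries $d_n$, $s_n$. Positivity then follows from the Cholesky-type recursion $\delta_0=d_0$, $\delta_n=d_n-s_{n-1}^2/\delta_{n-1}$, together with an induction proving the uniform lower bound $\delta_n>\frac{4n+10}{4n^2+20n+37}$ (the last entry $\delta_N$ being bounded separately), which reduces to a single polynomial inequality checkable by Sage. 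Without some such explicit tridiagonalization and a provable lower bound on the pivots, your proposal establishes the framework but not the theorem; a list of Sage-computed determinants for finitely many $N$ would not suffice.
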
 

\begin{proof}  Using $k=2$, we find that the entries of $Q$ = [$q_{ij}$], in irreducible form, are  
\begin{equation*} q_{ij} = \left \{ \begin{array}{lll}
\frac{12j^4+60j^3+104j^2+66j+7}{3 (j+2)^3 (2j+1)(2j+3)} & if    & i = j;\\
-\frac{1}{3} \cdot \frac{6i^2+16i+11}{(i+2)^2 (2i+1) (2i+3)} \cdot \frac{j+1}{j + 2}& if    & i > j;\\
-\frac{1}{3} \cdot \frac{6j^2+16j+11}{(j+2)^2 (2j+1) (2j+3)} \cdot \frac{i+1}{i + 2} & if  &  i < j. \end{array}\right.  \end{equation*}

\noindent In order to show that $Q$ is positive, it suffices to show that $Q_N$, the $N^{th}$ finite section of $Q$ (involving rows $i=0,1,2,...,N$ and columns $j=0,1,2,...,N$), has positive determinant for each positive integer $N$.   For columns $j=0,1,...,\mbox{N-1}$, we multiply the $(j+1)^{st}$ column of $Q_N$ by $z_j :\equiv \frac{(j+1)(j+3)}{(j+2)^2}$ and subtract from the $j^{th}$ column.  Call the new matrix $Q_N^{'}$.  Then we work with the rows of  $Q_N^{'}$.   For $i=0,1,...,N-1$,  we multiply the $(i+1)^{st}$ row of $Q_N^{'}$ by $z_i$ and subtract from the $i^{th}$ row.  This leads to the tridiagonal form 

\begin{equation*} Y_N :\equiv \left(\begin{array}{cccccc}
d_0 & s_0 & 0 & \hdots & 0 & 0\\
s_0 & d_1 & s_1 & \hdots & 0 & 0\\
0 & s_1 & d_2 & \hdots & . & 0\\
\vdots & \vdots & \vdots & \ddots & \vdots & \vdots\\
0 & 0 & . & \hdots & d_{N-1} & s_{N-1}\\
0 & 0 & 0 & \hdots & s_{N-1} & d_N \end{array}\right), \end{equation*}

\noindent where 
\begin{align*} d_n = q_{nn} - z_nq_{n,n+1} -z_n(q_{n+1,n}-z_nq_{n+1,n+1}) = q_{nn} -2z_nq_{n,n+1} + z^2_nq_{n+1,n+1}   \\
 = \frac{16n^7 +  200 n^6 +1024 n^5 + 2768 n^4 + 4226 n^3 + 3576 n^2 +  1481 n + 197}{(n+2)^4(n+3)(2n+1)(2n+3)(2n+5)}  \end{align*}  and
\begin{equation*} s_n = q_{n+1,n} - z_nq_{n+1,n+1} = - \frac{(n+1)(2n^2+8n+7)}{(n+2)^2(n+3)(2n+3)}  \textrm{ when } 0  \le n \le N-1; \end{equation*} and lastly, 
\begin{equation*} d_N = q_{NN} =  \frac{12N^4+60N^3+104N^2+66n+7 }{3(N+2)^3 (2N+1)(2N+3)}.  \end{equation*}

\noindent Note that $\det Y_N = \det Q_N^{'} = \det Q_N$.  Next we transform $Y_N$ into a triangular matrix with the same determinant.  The new matrix has diagonal entries $\delta_n$ which are given by the recursion formula:  $\delta_0 = d_0$, $\delta_n = d_n- s_{n-1}^2 / \delta_{n-1}  \hspace{1mm} (1 \le n \le N)$.   An induction argument shows that  \begin{equation*} \delta_n > \frac{4n+10}{4n^2+20n+37}  \end{equation*} for $0 \leq n \leq N-1$.    Since $d_N$ departs from the pattern set by the earlier $d_n$'s, $\delta_N$ needs to be handled separately:  \small
\begin{equation*} \delta_N \geq \frac{24N^8+140N^7+432N^6+1160N^5+2234N^4+2297N^3+1070N^2+216N+14 }{6(N+1)^4(N+2)^3(2N+1)^2(2N+3)}.  \end{equation*} \normalsize
\noindent Therefore $\det{Q_N} = \prod_{n=0}^N \delta_n > 0$, so the proof is complete.
\end{proof}

The verification of the induction step above reduces -- using the SageMath software system \cite{sage} -- to the observation that $96 n^{10} + 4944 n^{9} + 54624 n^{8} + 282288 n^{7} + 824294 n^{6} + 1447767 n^{5} + 1531563 n^{4} + 927504 n^{3} + 285132 n^{2} + 35022 n + 178 \geq 0$ for all $n \geq 1$.  The reduction was accomplished when SageMath executed the following command.  \\

\noindent  expand($(16*n$\textasciicircum$7 + 200*n$\textasciicircum$6 + 1024*n$\textasciicircum$5 +2768*n$\textasciicircum$4 + 4226*n$\textasciicircum$3 + 3576*n$\textasciicircum$2+ 1481*n + 197)*2*(n+1)$\textasciicircum$4*(2*n+1)*(4*n$\textasciicircum$2+20*n+37)-n$\textasciicircum$2*(2*n$\textasciicircum$2+4*n+1)$\textasciicircum$2*(4*n$\textasciicircum$2+12*n+21)*(n+2)$\textasciicircum$2*(n+3)*(2*n+5)*(4*n$\textasciicircum$2+20*n+37)-(4*n+10)*2*(n+1)$\textasciicircum$4*(n+2)$\textasciicircum$4*(n+3)*(2*n+1)$\textasciicircum$2*(2*n+3)*(2*n+5))$  \\

\section{Coposinormality}

We  now proceed to show that a weighted mean operator whose weight sequence is linear with positive coefficients has a posinormal adjoint.  The next theorem provides a tool for doing that.

\begin{theorem}   \label{coposi} Suppose $M = [a_ic_j]$ is a lower triangular factorable matrix that acts as a bounded operator on $\ell^2$  and that the following conditions are satisfied:

\begin{enumerate}
\item both $\{a_n\}$ and $\{a_n/c_n\}$ are positive decreasing sequences that converge to 0, and
\item the matrix $Z= [z_{ij}]$ defined by

\begin{equation*}z_{ij} = \left \{ \begin{array}{lll}
\frac {a_i}{a_j}  & if    & j = 0;\\ 
a_i (\frac {1}{a_j} - \frac {c_{j-1}}{c_j}\frac {1}{a_{j-1}}) & if    & 0 < j \leq i;\\
-\frac {c_{j-1}}{c_j} & if  &  j = i+1;\\
0 & if    & j > i+1. \end{array}\right. \end{equation*} 
 is a bounded operator on $\ell^2$,

\end{enumerate}

\noindent
Then $M$* is posinormal.
\end{theorem}

\noindent
\begin{proof}   See \cite{Rhal01}.
\end{proof}

\begin{theorem} \label{linearcoposi} The weighted mean matrix $M$ associated with the weight sequence $w_n = an+b$ with $a$, $b >0$ is a coposinormal operator on $\ell^2$.
\end{theorem}

\begin{proof}   Again it suffices to consider sequences of the form $w_n = kn+1$ with $k>0$.  It is clear that condition (a) of Theorem \ref{coposi} is satisfied, so we focus our attention now on condition (b).  When $c_n=kn+1$ and $a_n=\frac{2}{(n+1)(kn+2)}$, the matrix $Z = [z_{ij}]$ becomes
\begin{equation*} z_{ij} = \left \{ \begin{array}{lll}
\frac {2}{(i+1)(ki+2)}& if    & j = 0;\\
\frac{1}{(i+1)(ki+2)} \cdot \frac{3k^2j^2+(6k-k^2)j+2}{kj+1} & if    & 0 < j \leq i;\\
-\frac {kj-k+1}{kj+1} & if  &  j = i+1;\\
0 & if    & j > i+1. \end{array}\right. \end{equation*}
\noindent Note that for $j \leq i$, $0 < z_{ij} \leq 3/(i+1)$ for all $i$, so the lower triangular part $T$ of matrix $Z$ is a bounded operator on $\ell^2$.  If $W$ is the weighted shift with weight sequence $\{(kn+1)/(kn+k+1) : n \geq 0 \}$, then $Z = T - W^*$, so $Z \in B(\ell^2)$.  It now follows from Theorem \ref{coposi} that $M^*$ is posinormal.
\end{proof}

\begin{corollary}  \label{powers} If $M$ is the weighted mean operator on $\ell^2$ determined by the sequence $w_n = an+b$ with $a$, $b > 0$, then $M^j$ is both posinormal and coposinormal for each positive integer $j$.
\end{corollary}

\begin{proof}  This follows from Theorems \ref{linearposi} and \ref{linearcoposi}, together with \cite[Corollary $1$(b)]{Kubr02}.  \end{proof}

\begin{theorem}  If $M$ is the weighted mean operator on $\ell^2$ determined by the sequence $w_n = an+b$ with $a$, $b > 0$, then both $M$ and $M^*$ are injective and have dense range with \begin{equation*} Ran (M) = Ran (M^*). \end{equation*}
\end{theorem}

\begin{proof}  Since $M$ is both posinormal and coposinormal, it follows from Proposition \ref{propposi} that $Ker (M) = Ker (M^*)$ and $Ran (M) = Ran (M^*)$.  It is easy to see that $Ker (M) = \{0\}$.  Consequently, both $M$ and $M^*$ are one-to-one, and both have dense range. 
\end{proof}

 \section{Concluding Remarks}

After seeing the hyponormality result for $w_n=2n+1$, one might reasonably wonder whether the calculations for the weighted mean matrix for weight sequence $w_n=3n+1$ are significantly more complex, and the answer is -- Yes.  To illustrate, we observe that for $w_n = 2n+1$, the irreducible form of $q_{nn}$ has a degree $4$ numerator and a degree $5$ denominator (see the proof of Theorem \ref{odd}).  In contrast, for $w_n = 3n+1$, the irreducible form of $q_{nn}$ has a degree $6$ numerator and a degree $7$ denominator -- as well as larger coefficients for all the terms, so the hyponormality calculations for that weight sequence are significantly more complicated than what was presented here.  Nevertheless, we offer the following for the reader to consider:

\begin{itemize}  \item Successful calculations have been run for $w_n=kn+1$ when  $k=4/3$, $3/2$, $3$, and $4$, besides $k=2$ (for $k=1$, see \cite{RhaRho02}).  \item The first four finite sections of $Q$, whose general entries are displayed in \ref{eq:display}, have positive determinants for all $k>0$ (see Appendix). \end{itemize}   Consequently, we close with the following conjecture.

\vspace{2mm}

{\bf Conjecture 4.1.} {\it If $M$ is a weighted mean matrix whose weight sequence is $w_n = an + b$ with $a \geq b >0$, then $M$ is a hyponormal operator on $\ell^2$.}

\section{Appendix}

The expressions below refer to the second bulleted item immediately preceding Conjecture $4.1$.  The determinant of the first finite section of $Q$ is \begin{equation*} \det(Q_0) = \frac{k^2+4k+2}{(k+1)^2(k+2)^2}, \end{equation*} the determinant of the second is  \begin{equation*} \det(Q_1) = \frac{7k^6+56k^5+114k^4+110k^3+76k^2+36k+6}{9(k+1)^4(k+2)^2(2k+1)^2}, \end{equation*}   the determinant of the third is \begin{align*} \det(Q_2) = \frac{405k^{10}+4320k^9+13959k^8+22968k^7+24214k^6+18464k^5}{36(k+1)^4(k+2)^2(2k+1)^2(3k+1)^2(3k+2)^2} \\
                      +  \frac{10527k^4+4496k^3+1423k^2+288k+24}{36(k+1)^4(k+2)^2(2k+1)^2(3k+1)^2(3k+2)^2 },  \end{align*} 
\noindent and the determinant of the fourth is  \begin{align*} \det(Q_3) = \frac{101088k^{14}+1280448k^{13}+5308128k^{12}+11654028k^{11}}{900(k+1)^4(k+2)^2(2k+1)^4(3k+1)^2(3k+2)^2(4k+1)^2} \\ 
+  \frac{16473861k^{10}+16665540k^9+12793267 k^8+7701000k^7}{900(k+1)^4(k+2)^2(2k+1)^4(3k+1)^2(3k+2)^2(4k+1)^2}  \\  +  \frac{3719530k^6+1460772k^5+463575k^4+115268k^3}{900(k+1)^4(k+2)^2(2k+1)^4(3k+1)^2(3k+2)^2(4k+1)^2} \\  +
\frac{20975 k^2+2400k+120}{900(k+1)^4(k+2)^2(2k+1)^4(3k+1)^2(3k+2)^2(4k+1)^2} . \end{align*} 
  \noindent These calculations were assisted by \cite{sage}.  Clearly each of these determinants is positive for all $k>0$.

\end{document}